\theoremstyle{plain}
\newtheorem{thm}{Theorem}[section]
\theoremstyle{definition}
\newtheorem*{defn}{Definition}
\newtheorem{lem}[thm]{Lemma}
\newtheorem{prop}[thm]{Proposition}
\newtheorem{cor}[thm]{Corollary}
\theoremstyle{remark}
\newtheorem{rem}[thm]{Remark}
\newcommand{\til}[1]{\widetilde{#1}}
\newcommand{\norm}[1]{\left\Vert#1\right\Vert}
\newcommand{\set}[1]{\left\{#1\right\}}
\newcommand{\abs}[1]{\left\lvert#1\right\rvert}
\newcommand{\eps}{\varepsilon}
\newcommand{\bbN}{\mathbb{N}}
\newcommand{\bbR}{\mathbb{R}}
\newcommand{\cB}{\mathcal{B}}
\newcommand{\cC}{\mathcal{C}}
\newcommand{\cH}{\mathcal{H}}
\newcommand{\cK}{\mathcal{K}}
\newcommand{\cM}{\mathcal{M}}
\newcommand{\cP}{\mathrm{P}}
\newcommand{\cU}{\mathrm{U}}
\newcommand{\cSU}{\mathrm{S}\mathrm{U}}
\DeclareMathOperator{\complex}{\mathrm{i}}
\DeclareMathOperator{\diag}{diag}
\title[Bounded normal generation]{Bounded normal generation for projective unitary groups of certain infinite operator algebras}
\author{Philip A. Dowerk}
\address{P.A.D., Analysis Section, KU Leuven, 3001 Leuven, Belgium}
\email{philip.dowerk@wis.kuleuven.be}
\author{Andreas Thom}
\address{A.T., Institut f\"ur Geometrie, TU Dresden, 01062 Dresden, Germany }
\email{andreas.thom@tu-dresden.de}
\begin{document}

\onehalfspace

\begin{abstract}
We study the question how quickly products of a fixed conjugacy class cover the entire group in the projective unitary group of the connected component of the identity of the Calkin algebra, as well as the projective unitary group of a factor von Neumann algebra of type III. Our result is that the number of factors that are needed is as small as permitted by the (essential) operator norm -- in analogy to a result of Liebeck-Shalev for non-abelian finite simple groups and analogous results for unitary groups of II$_1$-factors.
\end{abstract}

\maketitle

%\tableofcontents

\section{Introduction and preliminaries}

Let $G$ be a group and for $g \in G$ denote  the conjugacy class of $g$ by $g^G:= \{hgh^{-1} \mid h \in G\}$, similarly $g^{-G}:= (g^{-1})^G$. We say that $G$ has the bounded normal generation property (BNG) if for every nontrivial element $g\in G$, the conjugacy classes of $g$ and $g^{-1}$ generate the whole group in finitely many steps, i.e., there exists $k \in \mathbb N$ such that
\begin{equation}\label{def}G = (g^G \cup g^{-G})^k. \end{equation}
A function $f:G\setminus\set{1_G}\rightarrow\bbR$ giving an upper bound on the number of steps that are required is called normal generation function for $G$. Obviously, any group with property (BNG) is simple, however the converse is not true in general -- even though many naturally occuring simple groups do have property (BNG). For example, a Baire category argument (see \cite[Proposition 2.2]{DT-15}) implies that any simple compact group has property (BNG). It is much harder to provide explicit normal generating functions, even for groups like ${\rm PU}(n)$. 
Informally, we say  that a normal generating function is optimal if it is best possible up to a multiplicative constant for some family of groups.
For finite simple groups an optimal normal generation function can be found in seminal work of Liebeck-Shalev \cite[Theorem 1.1]{LS-01}. For compact connected simple Lie groups, an explicit (dimension dependent, and hence non-optimal) normal generation function was given in work of Nikolov-Segal \cite[Proposition 5.11]{NS-12}. For the finite-dimensional projective unitary groups, the authors of the present article gave a optimal normal generation function in \cite[Corollary 5.11]{DT-15}, see also \cite{ST-13}. For the projective unitary group of a type II$_1$ factor we provided a concrete (close to optimal) normal generation function in \cite[Theorem 1.3]{DT-15}. 

Note that for the groups that we are interested in here, any normal generation function is unbounded. Examples of groups with uniform bound on the normal generation function are given in \cite{GG-16,G-13,L-96,VW-95}.

In this article, we complete the picture and study the projective unitary groups $\rm PU(\cM)$ of type III factor von Neumann algebras $\cM$. Moreover, we show property (BNG) for $\rm PU_0(\mathcal{C})$, the connected component of the identity of the projective unitary group of the Calkin algebra. In both cases we provide concrete and optimal normal generation functions. For definitions and more background, we refer to \cite{DT-15}. Most techniques used in this paper are adaptions of techniques introduced in \cite{DT-15}, so that it should be read as a companion paper.

Property (BNG) for $\rm PU_0(\mathcal{C})$ (without a normal generation function) is known from the work of Fong and Sourour \cite[Theorem 1]{FS-85}. The topological simplicity of $\rm PU(\mathcal{M})$, $\cM$ a type III factor, (with the uniform topology) was discovered by Kadison \cite[Theorem 2]{Kad-52}, while the simplicity of $\rm PU(\mathcal{M})$ (for countably decomposable type III factors) is a main result in the work of de la Harpe \cite{dlH-79}.  

%Let us define the following topological version of uniform normal generators (see \cite[Definition 2.1]{DT-15} for the algebraic version).
%\begin{defn}
% Let $G$ denote a topological group. We say that $g\in G$ is a \textbf{topological uniform generator for $G$} if there exists $k\in\bbN$ such that $G=\overline{(v^G\cup v^{-G})^k}$. If every nontrivial element in $G$ is a topological uniform normal generator for $G$, then we say that $G$ has the \textbf{topological bounded normal generation property} or \textbf{property (topBNG)}.
%\end{defn}
%

The projectuve unitary group $\cP\cU(\ell^2\mathbb N)$ is not simple, but there do exist uniform normal generators, i.e., elements that satisfy Equation \eqref{def} for some $k$ -- for example symmetries having two infinite-dimensional eigenspaces, see \cite[Theorem 1]{HK-58}. Our first main result gives information on how quickly the conjugacy class of an arbitrary unitary operator and its inverse can generate another unitary.
For its proof, all necessary definitions, and in particular the definition of $d_{\rm HS}$-closure see Section \ref{sec_Calkin}. 
For a von Neumann algebra $\cM$ we define $\ell(x)\coloneqq\inf_{\lambda\in {S^1}}\norm{1-\lambda x}$ for $x\in\cM$.
Denote by $\norm{\cdot}_{\mathrm{ess}}$ the essential norm and let $\ell_{\mathrm{ess}}(x)\coloneqq\inf_{\lambda\in {S^1}}\norm{1-\lambda x}_{\mathrm{ess}}$ for $x\in B(\ell^2 \mathbb N)$. 

\begin{thm}\label{thm_main_1}
Let $G:=\cP\cU(\ell^2\mathbb N)$ denote the projective unitary group. Assume that $u,v\in G$ satisfy $\ell(u)\leq m\ell_{\rm ess}(v)$. Then 
 $$u\in \overline{(v^{G}\cup v^{-G})^{128m}}^{d_{\rm HS}}.$$
Moreover, if the elements $u$ and $v$ are diagonalizable, then we have
$$u\in(v^{G}\cup v^{-G})^{128m}.$$
\end{thm}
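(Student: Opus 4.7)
The plan is to follow the two-step strategy from \cite{DT-15}, replacing the spectral radius of $1-\lambda v$ by its essential version. Set $\delta \coloneqq \ell_{\mathrm{ess}}(v)$ and, after replacing $v$ by $\mu v$ for a suitable $\mu\in S^1$ (which does not alter the class in $G$), assume that $\delta = \norm{1-v}_{\mathrm{ess}}$; similarly normalize so that $\ell(u)=\norm{1-u}$. The argument then splits into a ``splitting'' part that contributes the factor $m$ and a ``bounded-cost'' part that contributes the factor $128$.

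First I would decompose $u$ into $m$ commuting near-identity factors. Writing $u=\exp(\complex A)$ for a self-adjoint $A$ with norm comparable to $\ell(u)$, I would choose continuous functions $f_1,\dots,f_m$ on $\sigma(A)$ with $\sum_i f_i=\id$ and $\norm{f_i}_\infty\le \norm{A}/m$, and set $u_i\coloneqq\exp(\complex f_i(A))$. Then the $u_i$ mutually commute, $u=u_1\cdots u_m$, and a direct computation gives $\ell(u_i)\le \ell(u)/m\le\delta$.

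Second, and this is the crux, I would establish the case $m=1$: every $w\in G$ with $\ell(w)\le \ell_{\mathrm{ess}}(v)$ lies in $\overline{(v^G\cup v^{-G})^{128}}^{d_{\mathrm{HS}}}$, with equality (no closure) when $w$ and $v$ are both diagonalizable. Following the pattern of the II$_1$ case in \cite{DT-15}, this would proceed through two bounded-cost sublemmas. The first extracts from the essential spectral gap of $v$ a pair of points of $\sigma_{\mathrm{ess}}(v)$ at distance $\ge\delta$ on $S^1$ and uses them, via a few conjugations of $v$ and $v^{-1}$, to realize rotations of prescribed angles on two-dimensional invariant subspaces; iterating this produces, from a bounded number of conjugates, a self-adjoint symmetry with two infinite-dimensional eigenspaces. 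The second sublemma is a Halmos--Kakutani type decomposition: any $w$ with $\ell(w)\le\delta$ is a product of a bounded number of such symmetries. Multiplying the two bounded costs yields the constant $128$. Combining with the first part, applying the crux statement to each $u_i$ gives the desired $u\in\overline{(v^G\cup v^{-G})^{128m}}^{d_{\mathrm{HS}}}$. In the diagonalizable case, both the splitting and the symmetry construction can be arranged on a common orthonormal basis so that no approximation in $d_{\mathrm{HS}}$ is needed.

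The main obstacle is the bounded-cost symmetry construction inside the crux step: one has to extract enough unitary flexibility from finitely many conjugates of a single operator whose essential spectrum only has Chebyshev radius $\ge\delta$, while keeping track of the fact that conjugation in $B(\ell^2\bbN)$ does not absorb Hilbert--Schmidt (let alone compact) perturbations. That gap between what conjugation can reach pointwise and what it can reach up to $d_{\mathrm{HS}}$-perturbations is precisely what forces the $d_{\mathrm{HS}}$-closure in the non-diagonalizable case, and handling it in a way that produces a constant independent of $w$ is the delicate point.
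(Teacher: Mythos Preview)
Your plan has a genuine gap in the ``crux'' step. You claim that from a universally bounded number of conjugates of $v$ and $v^{-1}$ one can manufacture a self-adjoint symmetry $s$ with two infinite-dimensional eigenspaces, and that multiplying this cost by a Halmos--Kakutani constant gives the $128$. But $\ell_{\mathrm{ess}}$ is conjugation-invariant and subadditive on products: if $s=\prod_{i=1}^{k} g_i v^{\pm 1} g_i^{-1}$ then $\ell_{\mathrm{ess}}(s)\le k\,\ell_{\mathrm{ess}}(v)$. Since such a symmetry has $\ell_{\mathrm{ess}}(s)=2$, you are forced to take $k\ge 2/\ell_{\mathrm{ess}}(v)$, so no universal bound is possible. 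Consequently the product of the two ``bounded costs'' cannot be a constant, and the condition $\ell(w)\le\delta$ in your second sublemma is never actually used (Halmos--Kakutani gives four symmetries for \emph{any} unitary). If you repair this by allowing the symmetry step to cost $O(1/\delta)$, you immediately get every $u$ in $(v^G\cup v^{-G})^{O(1/\delta)}$ regardless of $\ell(u)$, and then your preliminary splitting $u=u_1\cdots u_m$ is redundant; you end up proving the theorem only in the crude form $m=\lceil 2/\delta\rceil$.

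The paper proceeds quite differently and never passes through symmetries. After invoking Voiculescu's Weyl--von~Neumann theorem to replace $u,v$ by diagonal operators up to an arbitrarily small $d_{\mathrm{HS}}$-perturbation (this is the sole source of the closure), it writes the diagonal $u$ in a \emph{product decomposition} $u=\prod_j u_j$ with $u_j\in S_j\cong\mathrm{SU}(2)$, and the diagonal $v$ in a \emph{torus decomposition}. An angle-sum reordering ensures that the partial angle sums of $u$ are controlled by $\ell(u)$, and a rearrangement of $v$ extracts, along an index set $J$, consecutive eigenvalue gaps of size $\ge\ell_{\mathrm{ess}}(v)/2$. The factor $m$ then enters \emph{inside} the Nikolov--Segal $\mathrm{SU}(2)$ lemma: since each $u_{i_k}$ has angle $\le 4m$ times the $k$-th gap of $v$, one gets $u_{i_k}\in (v_{j_k}^{S_{j_k}})^{2\cdot 4m}$, and because the $S_{j_k}$'s commute and $v$ commutes with the remaining torus part, infinitely many of these are handled simultaneously at cost $4\cdot 4m$. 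Splitting $\bbN_0$ into even and odd indices and running this twice gives $32m$; the extra factor $4$ to reach $128m$ comes from the case $u\in S^1\cdot\cU(\ell^2\bbN)_K$, where one first writes the angle sequence as a sum of two sequences whose positive and negative parts both diverge. There is no decomposition of $u$ into $m$ commuting factors and no Halmos--Kakutani step.
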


Note that in the converse direction, we have that if $u$ is a product of $k$ conjugates of $v$ then $\ell_{kt}(u)\leq k\ell_t(v)$ for all $t\geq 0$, see \cite[Proposition 4.9]{DT-15}. Here $\ell_t(u)$ denotes the $t$-th generalized projective $s$-number of $u$ as introduced in \cite{DT-15}.

The preceding theorem can be used to provide an optimal normal generation function for the connected component of the projective unitary group $\rm PU_0(\mathcal{C})$ of the Calkin algebra. Note that $\rm PU(\mathcal{C}) \cong \mathbb Z \times \rm PU_0(\mathcal{C})$, where the $\mathbb Z$-factor is given by the Fredholm index, so that property (BNG) for $\rm PU_0(\mathcal{C})$ is the best that one can hope for. 
In Section \ref{sec_Calkin} we derive the following collorary of Theorem \ref{thm_main_1}. 

\begin{thm}\label{thm_calkin}
 Let $G=\rm PU_0(\mathcal{C})$ denote the connected component of the identity of the projective unitary group of the Calkin algebra on a separable infinite-dimensional Hilbert space. Let $u,v\in G$ and assume that $\ell_{\mathrm{ess}}(u)\leq m\ell_{\mathrm{ess}}(v)$. Then
 $$u\in(v^G\cup v^{-G})^{32m}.$$ 
In particular, $G$ has property {\rm (BNG)} with an optimal normal generation function $f \colon G \setminus \{1\} \to \mathbb R$ given by $f(v)\coloneqq 64/\ell_{\mathrm{ess}}(v)$.
\end{thm}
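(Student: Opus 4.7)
\emph{Proof proposal.} The plan is to lift $u$ and $v$ from the Calkin algebra to diagonalizable unitaries on $\ell^2\bbN$ whose projective norm matches the essential norm downstairs, and then apply the diagonalizable case of Theorem \ref{thm_main_1}. First, I would choose any unitary lifts $\hat u,\hat v\in\cU(B(\ell^2\bbN))$ of $u,v$; these exist because the Calkin quotient restricts to a surjection onto the component $\cU_0(\cC)$ of the identity. Next I would invoke the Weyl--von Neumann--Berg theorem to replace $\hat u,\hat v$ by diagonal unitaries $D_u,D_v$ differing from the original lifts by compact operators and satisfying $\sigma(D_u)=\sigma_{\mathrm{ess}}(\hat u)=\sigma(u)$ and $\sigma(D_v)=\sigma(v)$. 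Then $D_u,D_v$ are diagonalizable lifts of $u,v$ with $\ell(D_u)=\ell_{\mathrm{ess}}(u)$ and $\ell(D_v)=\ell_{\mathrm{ess}}(D_v)=\ell_{\mathrm{ess}}(v)$, so the hypothesis translates to $\ell(D_u)\leq m\,\ell_{\mathrm{ess}}(D_v)$.

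The diagonalizable case of Theorem \ref{thm_main_1} then yields an exact factorization $D_u\in(D_v^{G'}\cup D_v^{-G'})^{128m}$ in $G'\coloneqq\cP\cU(\ell^2\bbN)$ (no $d_{\mathrm{HS}}$-closure). Projecting via the continuous homomorphism $G'\to G$ induced by the Calkin quotient sends $D_u\mapsto u$ and conjugates of $D_v^{\pm 1}$ to conjugates of $v^{\pm 1}$, producing the weaker statement $u\in(v^G\cup v^{-G})^{128m}$. To sharpen the constant from $128$ to $32$, the cleanest route is to revisit the internal steps of Theorem \ref{thm_main_1}: the constant $128$ accounts for finite-dimensional correction terms that handle isolated eigenvalues of the lifts, and these cleanup steps are vacuous in $\cC$ since compact operators are quotiented away. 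A careful rerun of the construction in \cite{DT-15} with essentially diagonal data should recover the factor of four. This is the main technical obstacle I anticipate; everything else is bookkeeping around the Calkin quotient and Berg's theorem.

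The second assertion follows formally. Any unitary $u\in\cC$ satisfies $\ell_{\mathrm{ess}}(u)\leq 2$, so setting $m\coloneqq\lceil 2/\ell_{\mathrm{ess}}(v)\rceil$ in the first part gives $u\in(v^G\cup v^{-G})^{32m}$ with $32m\leq 64/\ell_{\mathrm{ess}}(v)=f(v)$, establishing property (BNG) with bound $f$. For optimality I would invoke a Calkin analogue of \cite[Proposition 4.9]{DT-15}: if $u$ is a product of $k$ conjugates of $v^{\pm 1}$, then $\ell_{\mathrm{ess}}(u)\leq k\,\ell_{\mathrm{ess}}(v)$. Picking, for instance, a symmetry $u\in\cC$ with essential spectrum $\{\pm 1\}$ so that $\ell_{\mathrm{ess}}(u)=2$ forces $k\geq 2/\ell_{\mathrm{ess}}(v)$, showing $f$ is best possible up to the multiplicative constant.
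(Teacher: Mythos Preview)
Your proposal is correct and follows the same strategy as the paper: lift $u,v$ to $H\coloneqq\cP\cU(\ell^2\bbN)$ so that $\ell$ of the lift equals $\ell_{\mathrm{ess}}$ downstairs, invoke Theorem~\ref{thm_main_1}, and push the factorisation through the Calkin quotient. The only real difference is how the constant $32m$ is reached. The paper does not first obtain $128m$ and then argue that a factor of four can be removed; instead it cites the \emph{proof} of Theorem~\ref{thm_main_1} to get
\[
u'\in\overline{\bigl(v'^{H}\cup v'^{-H}\bigr)^{32m}}^{\,d_{\rm HS}}
\]
directly (the constant $32m$ already appears in the branch $u'\notin S^1\cdot\cU(\ell^2\bbN)_K$), and then notes that the $d_{\rm HS}$-closure collapses under the quotient map to $\cC$ because Hilbert--Schmidt operators are compact. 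Your description of where the extra factor of four lives is slightly off: it is not a finite-dimensional correction for isolated eigenvalues, but the cost of the splitting $\theta_n=\theta_n'+\theta_n''$ used when the lift lies in $S^1\cdot\cU(\ell^2\bbN)_K$. Your lifts $D_u$ with $\sigma(D_u)=\sigma_{\mathrm{ess}}(u)$ avoid that branch for exactly the reason you give, so your route also arrives at $32m$ once you look inside the proof. The argument for the second assertion and the optimality remark are fine.
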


Our third main result concerns type III factor von Neumann algebras and is analogous to the result about the Calkin algebra. 

\begin{thm}\label{thm_main_3}
 The projective unitary group $\rm PU(\cM)$ of a type $\rm III$ factor von Neumann algebra has property {\rm (BNG)}. An optimal normal generation function is given by $f(v)\coloneqq 2048/\ell(v)$ for $v\in\rm PU(\cM)\setminus\set{1}$. 
\end{thm}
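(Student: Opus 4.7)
The plan is to transfer the argument of Theorem \ref{thm_calkin} to a type $\mathrm{III}$ factor $\cM$, exploiting the fact that every nonzero projection in $\cM$ is Murray--von Neumann equivalent to $1$. The goal is to first prove a ``one-step'' analogue of Theorem \ref{thm_main_1}: there is a universal constant $C\leq 1024$ such that whenever $u,v\in\mathrm{PU}(\cM)\setminus\{1\}$ satisfy $\ell(u)\leq m\ell(v)$ one has $u\in(v^{G}\cup v^{-G})^{Cm}$. Combined with the trivial inequality $\ell(u)\leq 2$, this yields the normal generation function $f(v)=2048/\ell(v)$. Optimality is immediate from the remark after Theorem \ref{thm_main_1}: if $u=u_1\cdots u_k$ with each $u_i$ conjugate to $v^{\pm 1}$, then $\ell(u)\leq k\ell(v)$, so $k\geq \ell(u)/\ell(v)$, and nothing better than $C/\ell(v)$ can be achieved.

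The proof I would carry out proceeds in three stages. \emph{Spectral normalisation:} lift $u,v$ to $\cU(\cM)$ and multiply by the scalars in $S^1$ realising $\ell(u)$ and $\ell(v)$, so the spectra of the lifts lie in arcs based at $1$ of the corresponding lengths. \emph{Discretisation of $v$:} approximate $v$ in operator norm by a unitary $v'$ with finite spectrum supported in the same arc. This is where the type $\mathrm{III}$ property is decisive, since every spectral projection of $v$ is either zero or properly infinite and hence equivalent to any prescribed nonzero projection; the approximation can therefore be absorbed into a subsequent conjugation, not left as a $d_{\mathrm{HS}}$-error as in Theorem \ref{thm_main_1}. \emph{Symmetry decomposition of $u$:} following the pattern of Section \ref{sec_Calkin} and of \cite{DT-15}, write $u$ as a bounded product of symmetries $s_i=2p_i-1$ with $p_i$ and $1-p_i$ both nonzero, and then express each $s_i$ as a bounded product of conjugates of $v^{\pm 1}$. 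Murray--von Neumann equivalence of any two nonzero projections, combined with Dixmier's averaging theorem, systematically replaces the trace-based comparison arguments of the type $\mathrm{II}_1$ case. Concatenating the constants obtained in these steps yields the one-step bound. The case of general $m$ follows by splitting $u$ into $m$ unitary factors of small oscillation via the submultiplicativity property cited after Theorem \ref{thm_main_1}.

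The main obstacle is the third step. In the type $\mathrm{II}_1$ setting of \cite{DT-15} one used the trace to control the ``sizes'' of the projections entering the symmetry decomposition; in the type $\mathrm{III}$ setting no faithful trace is available, and one must rely exclusively on Murray--von Neumann equivalence, combined with Dixmier averaging, to realise each approximate conjugation as a genuine inner automorphism of $\cM$. The benefit justifies the effort: unlike Theorem \ref{thm_main_1}, no $d_{\mathrm{HS}}$-closure is required and the result is stated in the operator norm directly, at the price only of a somewhat larger universal constant ($2048$ here versus $64$ in Theorem \ref{thm_calkin}).
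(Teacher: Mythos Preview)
Your overall architecture coincides with the paper's: approximate $v$ by a finite-spectrum unitary, produce a single nontrivial symmetry $s$ out of boundedly many conjugates of $v^{\pm1}$, and then invoke Fillmore's result that in a properly infinite factor every unitary is a product of at most four symmetries (all of which, in type~III, are conjugate to $s$). This is exactly how the paper obtains the constant $2048 = 4\cdot 512$.

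Where your plan has a genuine gap is the passage from $v$ to a symmetry. You propose to ``follow the pattern of Section~\ref{sec_Calkin}'' and to rely on Murray--von Neumann equivalence together with Dixmier averaging. Neither suffices. The machinery of Section~\ref{sec_Calkin} (angle-sum ordering, torus/product decompositions) rests on Voiculescu's diagonalisation theorem, which is specific to $\cB(\ell^2\bbN)$ and has no analogue in a general type~III factor; and Dixmier averaging produces scalars, not symmetries, so it is not the right tool here. The paper instead uses a commutator reduction (Proposition~\ref{prop_comm}): one splits the spectral projections of $v'$ in half and finds $w$ so that $[v,w]$ has the block form $\left(\begin{smallmatrix}a&0\\0&1\end{smallmatrix}\right)$ with $\ell(v)\leq 8\ell(a)$. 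From this one builds $v''=\left(\begin{smallmatrix}a&0\\0&a^{-1}\end{smallmatrix}\right)\in(v^G\cup v^{-G})^4$, which lives in $M_{2\times 2}(L^\infty(\sigma(v''),\mu))$ and has ``determinant~$1$'' fibrewise. This is the step you are missing: it is what replaces the angle-sum bookkeeping of Section~\ref{sec_Calkin} and makes the $\mathrm{SU}(2)$ lemma (Lemma~\ref{lem_su2} via \cite[Lemma~7.4]{DT-15}) applicable fibre by fibre to produce a genuine symmetry in $\leq 128/\ell(v)$ conjugates of $v''$. Without this reduction your third stage is only a wish, not a mechanism.

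A minor point: your ``one-step'' formulation ($\ell(u)\leq m\ell(v)\Rightarrow u\in(v^G\cup v^{-G})^{Cm}$) is not how the paper proceeds, and your proposed route to it via ``splitting $u$ into $m$ unitary factors of small oscillation'' is circular, since such a splitting is essentially what you are trying to prove. The paper avoids this by going directly: once a symmetry is in hand, Fillmore gives \emph{every} $u$ in four more steps, with no dependence on $\ell(u)$ at all.
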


This result completes the picture for property (BNG) regarding von Neumann algebras: $\mathrm{PU}(\cM)$ has property (BNG) if $\cM$ is a factor of type $\mathrm{I}_n$, $\mathrm{II}_1$ or type $\mathrm{III}$. In the cases $\mathrm{I}_\infty$ and $\mathrm{II}_\infty$, the group does not have property (BNG).

\section{Generation for unitary operators on a Hilbert space}\label{sec_Calkin}

Let $\cM$ be a separable type $\mathrm{I}_{\infty}$-factor. Then $\cM=\cB(\ell^2\mathbb N)$  by Corollary V.1.28 in \cite{Tak}. In this section, we consider $G\coloneqq\cP\cU(\cM):=\cU(\cM)/(S^1 \cdot 1)$ endowed with the quotient topology of the strong operator topology. Note that $G$ is a Polish group with this topology.

%However, we will consider Hilbert-Schmidt perturbations in this group in the proof of Theorem \ref{thm_I}. Note that in the topology induced from the Hilbert-Schmidt norm, $\cU(\cM)$ respectively $\cP\cU(\cM)$ is not a topological group. 
%
%We explain the notion used in Theorem \ref{thm_I}.
%Let $u,v\in G$, $n\in\bbN$ and denote by $d_{\rm HS}$ the Hilbert-Schmidt norm. The notion $u\in\overline{(v^{G}\cup v^{-G})^{n}}^{d_{\rm HS}}$ used in Theorem \ref{thm_I} means that $u$ lies in $(v^{G}\cup v^{-G})^{n}$ up to an arbitrarily $d_{\rm HS}$-small Hilbert-Schmidt perturbation. \\

There is an obstruction for the bounded normal generation property of $\cP\cU(\ell^2 \mathbb N)$ which we will now describe.
Let $K$ denote the norm-ideal of compact operators $\cK(\ell^2 \mathbb N)$ on $\ell^2 \mathbb N$, endowed with the operator norm. 

The essential norm $\norm{\cdot}_{\mathrm{ess}}$ on the Calkin algebra $\cC:= B(\ell^2 \mathbb N)/K$ is the quotient norm. For $u\in\cP\cU(\mathcal{C})$ we let $$\ell_{\mathrm{ess}}(u)\coloneqq\inf_{\lambda\in S^1}\norm{1-\lambda u}_{\mathrm{ess}}.$$ We consider $\ell_{\rm ess}$ also for unitary operators $\cU(\ell^2 \mathbb N)$ by abuse of notation.
We define $\cU(\ell^2 \mathbb N)_K$ as the group of unitary operators on a Hilbert space $\cH$ such that $1-u$ is an element of $K$, 
$$\cU(\ell^2 \mathbb N)_K\coloneqq \set{u\in\cU(\ell^2 \mathbb N) \mid 1-u\in K}.$$
The group $\cU(\ell^2 \mathbb N)_K$ is naturally endowed with the topology given by the operator norm and a polish group with this topology. 

Observe that $\cU(\ell^2 \mathbb N)_K$ is a normal subgroup of $\cU(\ell^2 \mathbb N)$ which contains every finite-dimensional unitary group $\cU(n),\ n\in\bbN$. In fact, every proper normal subgroup of $\cU(\ell^2 \mathbb N)$ is contained in $\cU(\ell^2 \mathbb N)_K$ by \cite[Theorem 1]{FS-85}. Note that the center of $\cU(\ell^2 \mathbb N)_K$ is trivial, so that $\cU(\ell^2 \mathbb N)_K$ is actually also a subgroup of the projective unitary group $\cP\cU(\ell^2 \mathbb N)$.

Similar to $\cU(\ell^2 \mathbb N)_K$, one can use other operator ideals in this contruction. A special role is played by $\cU(\ell^2 \mathbb N)_{\rm HS}$, where ${\rm HS}$ denotes the ideal of Hilbert-Schmidt operators on $\ell^2 \mathbb N$. Again, we have $\cU(\ell^2 \mathbb N)_{\rm HS} \subset \cP\cU(\ell^2 \mathbb N)$ as a normal subgroup.

We now define the Hilbert-Schmidt metric on $\cP\cU(\mathbb \ell^2 \mathbb N)$ (or $\cU(\ell^2 \mathbb N))$ by
$$d_{\rm HS}(u,v) := \begin{cases} 1 & 1-uv^{-1} \not \in \rm HS\\ %\cU(\ell^2 \mathbb N)_{\rm HS} \\
\min\{1,\|1-uv^{-1}\|_{\rm HS}\} & 1-uv^{-1} \in \rm HS.\end{cases}$$ %\cU(\ell^2 \mathbb N)_{\rm HS}. \end{cases}$$
Note that the Hilbert-Schmidt metric is much finer that the uniform metric. This metric is rarely studied, however on major result of Voiculescu \cite[Corollary 2.6 and Theorem 4.2]{V-79} in the perturbation theory of unitary operators asserts that the $d_{\rm HS}$-closure of the set of diagonalizable unitary operators contains all unitary operators. This will be of great use for us, when we need to generate unitary operators in the Calkin algebra.

%Observe that for any $u\in\cU(\ell^2 \mathbb N)_K$ we have $\ell_t(u)\leq \mu_t(1-u)\rightarrow 0$ for $t\rightarrow \infty$ by compactness of $1-u$. In particular, for elements $u,v\in\cU(\ell^2 \mathbb N)_K$ there usually does not exist a number $m\in\bbN$ such that $\ell_0(u)\leq m\ell_t(v)$ for all $t\geq 0$. This is the obstruction for the bounded normal generation property of $\cP\cU(\ell^2 \mathbb N)$. As we will see, all problems disappear when passing to the projective unitary group of the Calkin algebra.
%

We embed $u\in\cU(n)$ into $\cU(\ell^2 \mathbb N)_K$ in the usual way by $\cU(n)\ni u\mapsto\left(\begin{smallmatrix}u&0\\0 &1\end{smallmatrix}\right)\in\cU(\ell^2 \mathbb N)_K$. It is not hard to show that the unions $\bigcup_{n\in\bbN}\cU(n)$ as well as $\bigcup_{n\in\bbN}\cSU(n)$ are dense in $\cU(\ell^2 \mathbb N)_K$ in the uniform topology. 

%It is known that $\cU(\ell^2 \mathbb N)_K$ is topologically simple in the uniform topology. 
%However, there is no topological uniform normal generator for $\cU(\ell^2 \mathbb N)_K$. Suppose the contrary and let $v$ be a topological uniform normal generator for $\cU(\ell^2 \mathbb N)_K$. Then one can replace the sequence of singular values of $1-v\in K$ with their square roots and obtain a corresponding element $u\in\cU(\ell^2 \mathbb N)_K$. But then there exists no $k\in\bbN$ such that $\ell_{kt}(u)\leq k\ell_t(v)$ for all $t\geq 0$, which contradicts \cite[Proposition 4.9]{DT-15}.

In anology to the finite-dimensional case (see \cite[Section 5]{DT-15}) we deal with two different decompositions of diagonal unitary operators $u \in \cU(\ell^2 \mathbb N)$ as product of simpler operators. If $u=\diag(\lambda_0,\lambda_1,\ldots)\in\cU(\ell^2 \mathbb N)$ is  diagonal, then we can write $u=\prod_{j\in\bbN_0}u_j$ in the \textbf{product decomposition} with $u_j$ defined as follows:
$$u_j=\diag(1,\ldots,1,\lambda_0\cdot\ldots\cdot\lambda_j,\overline{\lambda}_0\cdot\ldots\cdot\overline{\lambda}_j,1,1,\ldots),\ j\in\bbN_0.$$
Then, we have 
$$\prod_{j=0}^nu_j=\diag(\lambda_0,\ldots,\lambda_n,\overline{\lambda}_0\cdot\ldots\cdot\overline{\lambda}_n,1,1,\ldots)\rightarrow_{n\rightarrow \infty} u$$
strongly, but in general not uniformly.
We also need the \textbf{torus decomposition} $\prod_{j\in\bbN_0}\til{u}_j$ of $u$ by setting 
$$\til{u}_0=\diag(\lambda_0,\lambda_0,\ldots),\ \til{u}_j=\diag(1,\ldots,1,\overline{\lambda}_{j-1}\lambda_j,\overline{\lambda}_{j-1}\lambda_j,\ldots), \ j\in\bbN_0.$$
Then, in the strong operator topology, but in general not in the uniform topology, we have
$$\prod_{j=0}^n \til{u}_j=(\lambda_0,\lambda_1,\ldots,\lambda_{n-1},\lambda_n,\lambda_n,\ldots)\rightarrow_{n\rightarrow\infty} u.$$
%That is, when $\cU(\ell^2 \mathbb N)$ is endowed with the strong operator topology, then the above elements coincide: 
%$$u'=\prod_{j\in\bbN_0}\til{u}_j=\prod_{j\in\bbN_0}u_j.$$

Recall the following lemma, that will be useful for our generation processes.
\begin{lem}[Nikolov-Segal]\label{lem_su2}
 Let $u=\left(\begin{smallmatrix}e^{\complex\varphi} &0\\ 0 &e^{-\complex\varphi}\end{smallmatrix}\right)$ and $v=\left(\begin{smallmatrix}e^{\complex\theta} &0\\ 0 &e^{-\complex\theta}\end{smallmatrix}\right)$ be non-central elements in $G\coloneqq \cSU(2)$ with $\theta \in [-\pi/2,\pi/2]$. If $\abs{\varphi}\leq m\abs{\theta}$ for some even $m\in\bbN$, then $u\in(v^G)^m$. 
\end{lem}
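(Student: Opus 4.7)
The plan is to pass to the conjugacy class structure of $\mathrm{SU}(2)$. The identity $g(v_1\cdots v_m)g^{-1} = (gv_1g^{-1})\cdots(gv_mg^{-1})$ shows that $(v^G)^m$ is a union of conjugacy classes, and each conjugacy class in $\mathrm{SU}(2)$ is determined by a single angle $\bar\phi\in[0,\pi]$ via $\tr x = 2\cos\bar\phi$. Hence it suffices to track the set $A_m\subseteq[0,\pi]$ of conjugacy angles realised by $(v^G)^m$ and verify that the conjugacy angle of $u$, which is bounded above by $|\varphi|$ and by $\pi$, lies in $A_m$.

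The base case is a direct quaternion computation. Writing elements of $v^G$ as $\cos|\theta| + \sin|\theta|\cdot\mathbf{n}$ for a unit imaginary quaternion $\mathbf{n}$ and using $\mathbf{n}_1\mathbf{n}_2 = -\mathbf{n}_1\cdot\mathbf{n}_2 + \mathbf{n}_1\times\mathbf{n}_2$, the real part of the product of two such conjugates of $v$ equals $\cos^2|\theta| - \sin^2|\theta|\cos\psi$, where $\psi\in[0,\pi]$ is the angle between the two imaginary axes. Letting $\psi$ sweep $[0,\pi]$ continuously, the conjugacy angle of the product sweeps the full interval $[0,2|\theta|]$, which gives $A_2 = [0,2|\theta|]$. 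In particular $0\in A_2$ is realised by the product $v\cdot(JvJ^{-1}) = vv^{-1} = 1_G$, where $J = \left(\begin{smallmatrix}0&1\\-1&0\end{smallmatrix}\right)$ satisfies $JvJ^{-1} = v^{-1}$; this also yields the padding inclusion $(v^G)^{2k+2}\supseteq(v^G)^{2k}$ for all $k$.

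The same quaternion calculation applied to a product of elements with conjugacy angles $\alpha$ and $\beta$ shows that the resulting angle can realise any value in $[|\alpha-\beta|,\alpha+\beta]$ provided $\alpha+\beta\leq\pi$. Induction on $k$ now gives $A_{2k}\supseteq[0,\min(2k|\theta|,\pi)]$: for a target $\phi\in[0,\min((2k+2)|\theta|,\pi)]$, set $\alpha := \min(\phi,2k|\theta|)\in A_{2k}$ and $\beta := \phi - \alpha\in[0,2|\theta|]=A_2$; then $\alpha+\beta=\phi\leq\pi$ and $|\alpha-\beta|\leq\phi$, so $\phi\in A_{2k+2}$. When $2k|\theta|\geq\pi$ the padding inclusion instead gives $A_{2k+2}\supseteq A_{2k}=[0,\pi]$.

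Setting $m=2k$ and combining the hypothesis $|\varphi|\leq m|\theta|$ with the trivial bound $|\varphi|\leq\pi$ (after reducing the phase modulo $2\pi$) places the conjugacy angle of $u$ into $A_m$, so $u\in(v^G)^m$. The one subtle step is the wrap-around when $m|\theta|$ exceeds $\pi$: in that regime the angle-sum induction no longer produces genuinely new conjugacy angles, and it is precisely the padding inclusion coming from $1_G\in(v^G)^2$ that ensures the conclusion still holds for all even $m$ consistent with the trivial bound $|\varphi|\leq\pi$.
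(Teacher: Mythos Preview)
The paper does not prove this lemma at all; it is quoted verbatim as a known result of Nikolov--Segal (from \cite{NS-12}) and then used as a black box. So there is no proof in the paper to compare against.

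Your argument is correct and supplies a self-contained proof. The key observations---that $(v^G)^m$ is conjugation-invariant, that conjugacy classes in $\cSU(2)$ are parametrised by the angle $\bar\phi\in[0,\pi]$ via the trace, and that in the quaternion model the real part of a product is $\cos\alpha\cos\beta-\sin\alpha\sin\beta\cos\psi$---combine to give the ``spherical triangle inequality'' description of products of conjugacy classes, and your induction on $k$ with the padding $1_G\in(v^G)^2$ is exactly the right way to propagate $A_{2k}\supseteq[0,\min(2k|\theta|,\pi)]$. One small point of phrasing: rather than saying ``$|\varphi|\le\pi$ after reducing modulo $2\pi$'' (which could be misread as altering the hypothesis $|\varphi|\le m|\theta|$), it is cleaner to note directly that the conjugacy angle $\bar\phi$ of $u$ satisfies both $\bar\phi\le\pi$ (automatically) and $\bar\phi\le|\varphi|$ (since $\cos\bar\phi=\cos\varphi$ and $\bar\phi\in[0,\pi]$), hence $\bar\phi\le\min(m|\theta|,\pi)$. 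This is what your argument actually needs and it avoids any ambiguity.
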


We are now ready to prove the infinite-dimensional analogue of \cite[Lemma 5.3]{DT-15}.
As in \cite[Section 5]{DT-15} we denote by $S_j$ a copy of $\cSU(2)$ embedded in $\cU(\ell^2 \mathbb N)$ at the diagonal entries $j,j+1$.

\begin{lem}\label{lem_infsim}
 Consider $u=\diag(e^{\complex\theta_0},e^{\complex\theta_1},\ldots),v=\diag(e^{\complex\gamma_0},e^{\complex\gamma_1},\ldots)\in G\coloneqq \cU(\ell^2 \mathbb N)$. Assume that $\prod_{i\in\bbN_0}u_i$ is the product decomposition of $u$ and $v=\prod_{i\in\bbN_0}v_i$ is the torus decomposition of $v$. 
 %Let $u_{i_k}$ and $v_{j_l}$ correspond to $u=\diag(e^{\complex\theta_0},\ldots)$ and $v=\diag(e^{\complex\gamma_0},\ldots)$ as above such that $\abs{i_k-i_l},\abs{j_k-j_l}>1$ for all $k\neq l$ and some countable index sets $I, J\subseteq \bbN$ and $\abs{\theta_{i_k}-\theta_{i_k+1}}\leq m\abs{\gamma_{j_k}-\gamma_{j_k+1}}$ for some even number $m\in\bbN$, $k,l=0,1,\ldots$. 
Let $I=\set{i_0,i_1,\ldots},J=\set{j_0,j_1,\ldots}\subseteq\bbN_0$ be countable index sets such that $\abs{i_k-i_l}>1$ and $\abs{j_k-j_l}>1$ for all $k\neq l\in\bbN_0$, where $i_k,i_l\in I,j_k,j_l\in J$. If 
\begin{equation} \label{e1}
\abs{\sum^{i_k}_{i=1}\theta_{i}}\leq m\abs{e^{\complex \gamma_{j_k}}-e^{\complex\gamma_{j_k+1}}}
\end{equation} for some even number $m\in\bbN$ and all $k\in\bbN_0$. 
 Then $$\prod_{i\in I}u_{i}\in \left(v^G\cup v^{-G}\right)^{4m}.$$
\end{lem}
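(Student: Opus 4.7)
The plan is to manufacture, from two conjugates in $v^G \cup v^{-G}$, a single auxiliary element $W \in G$ that equals the identity outside the blocks $\bigcup_k \{i_k, i_k+1\}$ and is $\cSU(2)$-valued inside each block $S_{i_k}$. Applying the finite-dimensional Lemma \ref{lem_su2} block-wise and bundling the resulting conjugators via the pairwise commutativity of the $S_{i_k}$ then expresses $\prod_{i \in I} u_i$ as a product of just $2m$ conjugates of $W$, i.e., $4m$ elements of $v^G \cup v^{-G}$.

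\emph{Construction of $W$.} First pick a permutation unitary $\pi \in G$ with $\pi(j_k) = i_k$ and $\pi(j_k+1) = i_k+1$ for every $k$; the spacing assumptions $|i_k - i_l|>1$ and $|j_k - j_l|>1$ make the source and target pairs disjoint, so such a $\pi$ exists. Then $w := \pi v \pi^{-1}$ is a diagonal conjugate of $v$ with entries $e^{\complex\gamma_{j_k}}, e^{\complex\gamma_{j_k+1}}$ at positions $(i_k, i_k+1)$. Next let $\sigma := \prod_k \sigma_k \in G$, where $\sigma_k \in S_{i_k}$ is the swap $\bigl(\begin{smallmatrix}0 & 1\\ 1 & 0\end{smallmatrix}\bigr)$; the product converges strongly because the $\sigma_k$ have pairwise disjoint supports. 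Define
\[ W := w \cdot (\sigma w^{-1} \sigma^{-1}). \]
Outside the blocks, $\sigma$ acts as the identity, so the two factors cancel and $W = 1$ there; inside block $(i_k, i_k+1)$, $\sigma_k$ swaps the two entries of $w^{-1}$, producing
$W|_{S_{i_k}} = \diag\bigl(e^{\complex(\gamma_{j_k} - \gamma_{j_k+1})}, e^{-\complex(\gamma_{j_k} - \gamma_{j_k+1})}\bigr) \in \cSU(2)$,
which we denote $W_k$. Since $w \in v^G$ and $\sigma w^{-1} \sigma^{-1} \in v^{-G}$, we have $W \in (v^G \cup v^{-G})^2$.

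\emph{Block-wise Nikolov-Segal and assembly.} The element $u_{i_k}$ is the $\cSU(2)$-diagonal $\diag(e^{\complex\varphi_k}, e^{-\complex\varphi_k}) \in S_{i_k}$, where $\varphi_k$ is the $\theta$-sum on the left-hand side of \eqref{e1}. The elementary estimate $|e^{\complex\gamma_{j_k}} - e^{\complex\gamma_{j_k+1}}| = 2|\sin((\gamma_{j_k} - \gamma_{j_k+1})/2)| \leq |\gamma_{j_k} - \gamma_{j_k+1}|$ combined with \eqref{e1} gives $|\varphi_k| \leq m |\gamma_{j_k} - \gamma_{j_k+1}|$. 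After reducing the SU(2) angle of $W_k$ into the canonical window $[-\pi/2, \pi/2]$ demanded by Lemma \ref{lem_su2} — which for even $m$ amounts to at most doubling the constant, the central sign arising from the shift cancelling in an even product — we obtain $g_{k,1}, \ldots, g_{k, 2m} \in S_{i_k}$ with $u_{i_k} = \prod_{j=1}^{2m} g_{k,j} W_k g_{k,j}^{-1}$. Setting $g_j := \prod_k g_{k,j}$ (another strongly convergent commuting product), the disjoint-support commutativity ensures that $g_j W g_j^{-1}$ acts on block $(i_k, i_k+1)$ as $g_{k,j} W_k g_{k,j}^{-1}$ and as the identity elsewhere. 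Hence
\[ \prod_{j=1}^{2m} g_j W g_j^{-1} = \prod_{k} \prod_{j=1}^{2m} g_{k,j} W_k g_{k,j}^{-1} = \prod_{i \in I} u_i, \]
a product of $4m$ elements of $v^G \cup v^{-G}$.

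The main non-trivial point — and the only genuine departure from the finite-dimensional argument of \cite[Lemma 5.3]{DT-15} — is the simultaneous assembly: packing countably many block-local applications of Lemma \ref{lem_su2} into one product of $2m$ conjugates of $W$ relies crucially on the pairwise commutativity of the $S_{i_k}$ provided by the spacing hypothesis, and on the strong convergence of the resulting infinite commuting unitary products. The only other technical subtlety is the SU(2) angle-range bookkeeping, which is exactly what turns the naive count $2m$ into the claimed $4m$.
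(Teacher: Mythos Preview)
Your localization-via-commutator strategy is clean and natural, but the step ``reducing the $\cSU(2)$ angle of $W_k$ into $[-\pi/2,\pi/2]$ \ldots\ amounts to at most doubling the constant'' is false, and this is where the argument breaks. Write $\delta_k \coloneqq \gamma_{j_k}-\gamma_{j_k+1}\in(-\pi,\pi]$. If $|\delta_k|>\pi/2$ the reduced angle is $\theta=\delta_k\mp\pi$ with $|\theta|=\pi-|\delta_k|$, and Lemma~\ref{lem_su2} would require $|\varphi_k|\le 2m|\theta|=2m(\pi-|\delta_k|)$; you only know $|\varphi_k|\le m|\delta_k|$, and this fails once $|\delta_k|>2\pi/3$. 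Concretely, if $e^{\complex\gamma_{j_k}}=-e^{\complex\gamma_{j_k+1}}$ then $W_k=-I$ is central in $S_{i_k}$, so $(W_k^{S_{i_k}})^{2m}=\{I\}$ and no nontrivial $u_{i_k}$ can be reached --- even though \eqref{e1} holds with room to spare. No fixed multiple of $m$ repairs this.

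The paper's route avoids the problem precisely by \emph{not} passing to a commutator first. It conjugates $v$ itself by elements of $\prod_{j\in J}S_j$: on the block $(j_k,j_k+1)$ the operator $v$ reads $\diag(e^{\complex\gamma_{j_k}},e^{\complex\gamma_{j_k+1}})$, which up to a scalar is $\diag(e^{\complex\delta_k/2},e^{-\complex\delta_k/2})$ with angle $\delta_k/2\in[-\pi/2,\pi/2]$ automatically, so Lemma~\ref{lem_su2} applies with constant $2m$ and no reduction is needed. The price is that the scalar pieces and the off-block part of $v$ accumulate to a factor $\til v^{\,2m}$, which is then cancelled by a further $2m$ conjugates of $v^{-1}$; that is where the final count $4m$ comes from. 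Your commutator trick prepays this cancellation, but in doing so doubles the $\cSU(2)$ angle and loses control exactly when the two eigenvalues of $v$ at positions $j_k,j_k+1$ are close to antipodal.
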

\begin{proof}
 Write as above $u=\til{u}\prod_{k=1}^{\infty}u_{i_k}$ where $\til{u}=\prod_{l\in\bbN\setminus I}u_l$ and analogously $v=\til{v}\prod_{l=1}^{\infty}v_{j_l}$. Note that $S_{j_k}$ and $S_{j_l}$ commute elementwise for $k\neq l$, $j_k,j_l\in J$. Moreover, $\til{v}$ commutes with $S_{j_k}$ for all $j_k\in J$. Thus we get 
 $$ \left(v_{}^{\prod_{j\in J}S_{j}}\right)^m=\left(\til{v}_{}^{\prod_{j\in J}S_{j}} \prod_{j\in J}v_{j}^{S_{j}}\right)^m= \til{v}_{}^m \prod_{j\in J}\left(v_{j}^{S_{j}}\right)^m.$$
 Let $w\in\cU(\ell^2 \mathbb N)$ be an unitary operator such that $S_{i_k}^w=S_{j_k}$ for all $k\in\bbN_0$. Using Lemma \ref{lem_su2} and the inequality in \eqref{e1}, we obtain $u_{i_k}^w\in\left(v_{j_k}^{S_{j_k}}\right)^{2m}$ for all $k\in\bbN_0$, and hence
 $$\prod_{i\in I}u_{i}\in \left(\prod_{k \in \bbN_0} \left(v_{j_k}^{S_{j_k}}\right)^{2m} \right)^{w^*} = \left(\left( \left(\prod_{k \in \bbN_0} v_{j_k}\right)^{\prod_{k \in \bbN_0}{S_{j_k}}} \right)^{2m} \right)^{w^*}  = \left(\til{v}^{-2m}\left(v^{\prod_{j\in J}S_{j}}\right)^{2m}\right)^{w^{*}}.$$
 Since 
 $$(\til{v})^{-2}\in \til{v}^{-2}\prod_{j\in J}\left(v_{j}^{S_{j}}\right)^{-2}=\left(v^{\prod_{j\in J}S_{j}}\right)^{-2},$$
 we conclude
 $$\prod_{i\in I}u_{i}\in\left(\left(v^{\prod_{j\in J}S_{j}}\right)^{2m}\cdot\left(v^{-\prod_{j\in J}S_{j}}\right)^{2m}\right)^{w^{*}}\subset \left( v^G\cup v^{-G}\right)^{4m}.$$
 This finishes the proof.
\end{proof}

Before being able to define an infinite-dimensional variant of angle-sum optimality (cf. \cite[Definition 5.8]{DT-15}), we need to generalize \cite[Lemma 5.7]{DT-15} to our situation. 
\begin{lem}\label{lem_sum_angles_inf}
 Let $\set{\alpha_n}_{n\in\bbN_0}\subseteq \bbR$ be a sequence with infinitely many positive and infinitely many negative real numbers such that both the sum over all positive $\alpha_n$ and over all negative $\alpha_n$ diverge. Then there exists a permutation $\sigma\in S_{\infty}$ such that 
$$\abs{\sum_{i=0}^n \alpha_{\sigma(i)}}\leq \sup_{j\in\bbN_0}\abs{\alpha_j}\ \mbox{ for every } n\in\bbN_0.$$
\end{lem}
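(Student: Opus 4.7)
Let me denote $M := \sup_{j\in\bbN_0}\abs{\alpha_j}$ and assume $M<\infty$, since otherwise the claimed inequality is trivial. My plan is a greedy (Riemann-type) rearrangement: I will pick the next index so as to drive the partial sum back toward zero, using whichever subsequence (positive or negative) currently sits opposite to the sign of the running sum. The divergence of both the positive and negative subsums is precisely what ensures that this greedy construction exhausts every index, and hence produces a genuine permutation of $\bbN_0$.

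Concretely, split $\bbN_0$ into $P=\{j:\alpha_j>0\}$ and $N=\{j:\alpha_j\le 0\}$ and enumerate each in its natural order. I will construct $\sigma$ recursively. Set $S_{-1}=0$. Given $\sigma(0),\dots,\sigma(n-1)$ and $S_{n-1}=\sum_{i=0}^{n-1}\alpha_{\sigma(i)}$, let $\sigma(n)$ be the smallest so-far-unused index in $N$ if $S_{n-1}\ge 0$, and the smallest so-far-unused index in $P$ if $S_{n-1}<0$. This is well-defined as long as both $P$ and $N$ still contain unused indices, which is guaranteed at every stage because $P$ and $N$ are both infinite.

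An easy induction shows $\abs{S_n}\le M$ for all $n\ge 0$: indeed, if $S_{n-1}\ge 0$ then $\alpha_{\sigma(n)}\le 0$, so $S_n\in[\alpha_{\sigma(n)},S_{n-1}]\subseteq[-M,M]$, and the symmetric case is analogous. It remains to verify that $\sigma$ is a permutation, i.e.\ every index is eventually used. Suppose, for contradiction, that only finitely many indices in $P$ are used. Then from some step onward every $\sigma(n)$ lies in $N$; by construction this can only happen when $S_{n-1}\ge 0$ at each such step. But then $S_n$ evolves by adding infinitely many terms from $N$, whose total sum is $-\infty$ by hypothesis, forcing $S_n\to-\infty$ and contradicting $S_{n-1}\ge 0$ from some point on. The same argument with signs reversed handles $N$. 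Hence every index in $P\cup N=\bbN_0$ is used, and $\sigma\in S_\infty$.

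The only subtle point is ensuring that the greedy prescription never starves one of the two queues; this is handled cleanly by the divergence hypothesis, as above. I do not foresee a real obstacle, and the construction is essentially the standard proof that a conditionally convergent series can be rearranged to have bounded partial sums, adapted to the present discrete formulation.
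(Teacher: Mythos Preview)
Your proof is correct and follows essentially the same greedy Riemann-type rearrangement as the paper: at each step choose the next unused index of the sign opposite to the current partial sum. The paper's proof is a one-sentence sketch omitting the verification that $|S_n|\le M$ and that $\sigma$ is a bijection; you supply precisely those details, and your use of the divergence hypothesis to rule out starvation of either queue is exactly the point the paper leaves implicit.
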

\begin{proof}
We define $\sigma$ inductively. If $\sigma$ is already defined on $\{0,\dots,n\}$, we define $\sigma(n+1)$ to be the next index $k$ where $\alpha_k \leq 0$ if 
$\sum_{i=0}^n\alpha_{\sigma(i)}>0$ and the next index $k$ where $\alpha_k \geq 0$ if $\sum_{i=0}^n\alpha_{\sigma(i)} \leq0.$
\end{proof}

\begin{defn}
 Let $u=\diag(e^{\complex\theta_0},e^{\complex\theta_1},\ldots)$ with $\theta_i \in (\pi,\pi]$. Then we say that $u$ is \textbf{angle sum ordered} if the sequence $(\theta_n)_{n}$ satisfies
$$\abs{\sum_{i=0}^n \theta_{i}}\leq \sup_{j\in\bbN_0}\abs{\theta_j}\ \mbox{ for every } n\in\bbN_0.$$
\end{defn}

%Let $u\in \cU(\ell^2 \mathbb N)\setminus \cU(\ell^2 \mathbb N)_K$, $u\neq 1$. Let us interpret the points in the spectrum as elements $e^{\complex\varphi}\in S^1$ with angle $\varphi\in(-\pi,\pi]$. Observe that there is $\lambda\in S^1$ such that the essential spectrum of $\lambda u$ meets the upper and the lower half of $S^1$.
%For example, if the spectrum $\sigma(u)$ of $u$ is $\set{e^{\complex\pi/2},1}$, where $e^{\complex\pi/2}$ and $1$ have infinite multiplicities, then one can choose $\lambda=e^{-\complex\pi/4}$ to obtain $\sigma(\lambda u)=\set{e^{\complex\pi/4},e^{-\complex\pi/4}}$.
%
%
%\in \cU(\ell^2 \mathbb N)\setminus \cU(\ell^2 \mathbb N)_K$, $u\neq 1$, and $\sigma\in S_{\infty}$ as in Lemma \ref{lem_sum_angles_inf}. 
%

\begin{lem} \label{angleopt}
For any diagonalizable unitary $u\in\cU(\ell^2 \mathbb N) \setminus (S^1 \cdot \cU(\ell^2 \mathbb N)_K)$, there exists $\lambda \in S^1$ such that some element in $\diag(e^{\complex\theta_0},e^{\complex\theta_1},\ldots) \in (\lambda u)^G$, where $G\coloneqq \cU(\ell^2 \mathbb N)$, is angle sum ordered. Moreover, we may assume that
$$\abs{\sum_{i=0}^n\theta_{\sigma(i)}}\leq 2\ell(u).$$
\end{lem}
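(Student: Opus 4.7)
The strategy is to choose $\lambda \in S^1$ so that the diagonal angles of $\lambda u$ lie in the tightest symmetric interval around $0$, and then to permute the diagonal entries using Lemma~\ref{lem_sum_angles_inf}. Fix an orthonormal basis in which $u = \diag(e^{\complex\alpha_0}, e^{\complex\alpha_1}, \ldots)$ with $\alpha_i \in (-\pi,\pi]$, and set $A = \overline{\{e^{\complex\alpha_i} : i \in \bbN_0\}} \subseteq S^1$. Since $\mu \mapsto \sup_i |1 - e^{\complex(\mu + \alpha_i)}|$ is continuous and $S^1$ is compact, the infimum $\ell(u)$ is attained at some $\mu_0$. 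Set $\lambda = e^{\complex\mu_0}$ and pick representatives $\eta_i \in (-\pi,\pi]$ with $\lambda u = \diag(e^{\complex\eta_i})$ and $\sup_i|1-e^{\complex\eta_i}| = \ell(u)$. A brief balancing argument -- if $\sup_i\eta_i$ and $-\inf_i\eta_i$ were unequal, a small further rotation of $\lambda$ would strictly decrease $\sup_i|1-e^{\complex\eta_i}|$, contradicting optimality of $\mu_0$ -- forces $\sup_i\eta_i = -\inf_i\eta_i =: M$. The elementary bound $\sin(x) \geq (2/\pi)\,x$ on $[0,\pi/2]$ then yields $|\eta_i| \leq (\pi/2)\,|1-e^{\complex\eta_i}| \leq (\pi/2)\,\ell(u) \leq 2\ell(u)$ for every $i$.

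Next I verify the hypotheses of Lemma~\ref{lem_sum_angles_inf} for the sequence $(\eta_i)$. The closed set $\lambda A$ is contained in the closed arc $\{e^{\complex t} : |t| \leq M\}$, and since this is the \emph{smallest} such arc centered at $1$, both endpoints $e^{\pm\complex M}$ must lie in $\lambda A$ -- otherwise the arc could be shrunk on one side, contradicting the balancing. Hence $+M$ and $-M$ are accumulation points of $(\eta_i)$. Furthermore $M > 0$: otherwise $\lambda u = 1$ and so $u \in S^1 \cdot \cU(\ell^2\bbN)_K$, contrary to assumption. Consequently, for any $c \in (0,M)$, infinitely many $\eta_i$ lie in $[c, M]$ and infinitely many in $[-M, -c]$; the sums of the positive and of the negative $\eta_i$ therefore both diverge, while $\sup_j|\eta_j| = M$.

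Lemma~\ref{lem_sum_angles_inf} then supplies a permutation $\sigma \in S_\infty$ with $\bigl|\sum_{i=0}^n \eta_{\sigma(i)}\bigr| \leq M \leq 2\ell(u)$ for every $n$. Let $P_\sigma \in \cU(\ell^2\bbN)$ be the associated permutation unitary; setting $\theta_i := \eta_{\sigma(i)}$, one obtains
\[
P_\sigma (\lambda u) P_\sigma^{*} = \diag(e^{\complex\theta_0}, e^{\complex\theta_1}, \ldots) \in (\lambda u)^{G},
\]
which is angle sum ordered and satisfies the required partial-sum bound $\bigl|\sum_{i=0}^n \theta_i\bigr| \leq 2\ell(u)$.

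The main obstacle is the balancing step together with the verification that $\pm M$ are genuine accumulation points of $(\eta_i)$ -- not merely one-sided suprema -- which is what rules out pathological clustering of all angles on one side of $0$. The boundary case $M = \pi$ (where $A$ wraps around more than a half-circle, so the "smallest enclosing arc" degenerates to $S^1$) forces $\ell(u) = 2$ and is handled directly: the dense spread of $(e^{\complex\alpha_i})$ already yields divergent positive and negative partial sums, and the bound $|\theta_i| \leq \pi \leq 2\ell(u)$ is trivial.
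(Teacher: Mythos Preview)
Your optimal-rotation and balancing set-up is the natural route to the bound $\sup_j|\eta_j|\leq 2\ell(u)$, but the implication
\[
e^{\pm\complex M}\in\lambda A \ \Longrightarrow\ \pm M\text{ are accumulation points of the sequence }(\eta_i)
\]
is not valid: a point of the closure $\lambda A=\overline{\{e^{\complex\eta_i}:i\in\bbN_0\}}$ may be an isolated eigenvalue of finite multiplicity rather than a limit point. Take $u=\diag(1,e^{\complex\alpha_1},e^{\complex\alpha_2},\ldots)$ where $(\alpha_j)_{j\geq1}$ enumerates a countable dense subset of $[-\pi/2,-\pi/4]$. The essential spectrum of $u$ is the arc $\{e^{\complex t}:t\in[-\pi/2,-\pi/4]\}$, so $u\notin S^1\cdot\cU(\ell^2\bbN)_K$. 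The optimal rotation is $\lambda=e^{\complex\pi/4}$, giving $\eta_0=\pi/4=M$ and $\eta_j\in[-\pi/4,0]$ for all $j\geq1$. Here $e^{\complex M}$ belongs to $\lambda A$ only as an isolated point, there is exactly one positive $\eta_i$, the sum of the positive terms is finite, and Lemma~\ref{lem_sum_angles_inf} does not apply --- yet your argument would still proceed, since $M>0$.

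The root issue is that you invoke the hypothesis $u\notin S^1\cdot\cU(\ell^2\bbN)_K$ only to exclude $M=0$. Its actual content is that the \emph{essential} spectrum of $u$ is not a single point. The paper exploits precisely this: one rotates so that the essential spectrum of $\lambda u$ meets both the open upper and the open lower half of the unit circle; each half then contains an essential-spectrum point and hence infinitely many eigenvalues bounded away from $0$, so the divergence hypotheses of Lemma~\ref{lem_sum_angles_inf} follow immediately. Your choice of the $\ell$-optimal rotation is what controls $\sup_j|\theta_j|$ in terms of $\ell(u)$, but it does not by itself force the essential spectrum to straddle the real axis, and an additional argument reconciling the two requirements is needed.
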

\begin{proof} If $u \not \in S^1 \cdot \cU(\ell^2 \mathbb N)_K$ we may rotate $u$ so that it has essential spectrum on the upper and lower half of the unit circle. The claim follows now from Lemma \ref{lem_sum_angles_inf}. \end{proof}

We are now ready to prove Theorem \ref{thm_main_1}.
\begin{proof}[Proof of Theorem \ref{thm_main_1}] If $u=1 \in \cP\cU(\ell^2 \mathbb(\mathbb N))$, then there is nothing to prove. Hence, we may assume that $\ell(u)>0$ and consider some lift of $u$ to $\cU(\ell^2 \mathbb N)$. There exists $\delta>0$ such that 
$\ell(u)\leq 2m\ell_{\rm ess}(v)-\delta$.
We choose $\eps>0$ such that $\eps\leq\delta$.
Using a version of the non-commutative Weyl-von Neumann theorem by Voiculescu, see \cite[Corollary 2.6 and Theorem 4.2]{V-79}, we obtain the existence of diagonal elements 
$$u'=\diag(e^{\complex\theta_0},e^{\complex\theta_1},\ldots),\quad v'=\diag(e^{\complex\gamma_0},e^{\complex\gamma_1},\ldots)$$
and $g,h\in G$ such that $gu'g^{-1}$ and $hv'h^{-1}$ are $\eps$-close to $u$ and $v$ in the Hilbert-Schmidt metric $d_{\rm HS}$ respectively. 
%By Lemma \ref{lem_singrel} and since the Hilbert-Schmidt norm is always greater or equal than the operator norm, we have 
Since the Hilbert-Schmidt norm is always greater or equal than the operator norm, we have 
%$$ \ell(u')\leq 2m\ell_{\rm ess}(v')-\delta+2(m+1)\eps  \leq 2m\ell_{\rm ess}(v').$$
$$ \ell(u')\leq \ell(u)+\norm{u-u'}\leq 2m\ell_{\rm ess}(v')-\delta+\eps  \leq 2m\ell_{\rm ess}(v').$$
From the inequality $2m\ell_{\rm ess}(v') \geq \ell(u')$
%, from $\ell_t(v')\geq\ell_{\rm ess}(v')$ 
and from \cite[Lemma 5.5]{DT-15} we conclude the existence of a set $J=\set{j_0,j_1,\ldots}\subseteq\bbN_0$ such that $\abs{j_k-j_l}>1$ for all $k\neq l\in\bbN_0$, where $j_k,j_l\in J$, $v''\coloneqq\diag(e^{\complex\gamma_{j_0}},e^{\complex\gamma_{j_1}},\ldots)\in (v')^G$ and
$$\ell(u') \leq 2m \abs{e^{\complex \gamma_{j_k}}-e^{\complex\gamma_{j_k+1}}}
.$$

Assume now that $u \not \in S^1 \cdot \cU(\ell^2 \mathbb N)_K$ and $u''\in(u')^G$ is angle sum ordered (with corresponding permutation $\sigma$). Combining our results with Lemma \ref{angleopt}, we obtain
\begin{equation}\label{e2}
 \abs{\sum_{i=0}^n\theta_{\sigma(i)}}\leq 2\ell(u'') \leq 4m \cdot \abs{e^{\complex \gamma_{j_k}}-e^{\complex\gamma_{j_k+1}}}
 \end{equation}
for all $n,k \in \mathbb N$.
Before being able to generate $u''$ from $v''$, we need to decompose both elements in an appropriate way. Decompose $u''=\prod_{i\in\bbN_0}u_i$ into its product decomposition and $v''=\prod_{i\in\bbN_0}v_i$ into its torus decomposition. In order to generate infinitely many entries of $u''$ simultaneously, we partition $\bbN_0$ into disjoint sets $A_1$ and $A_2$, where $A_1\coloneqq\set{2n\mid n\in\bbN_0}$ and $A_2\coloneqq\set{2n+1\mid n\in\bbN_0}$.\\
 We use Lemma \ref{lem_infsim} and the inequality in \eqref{e2} to obtain
$\prod_{j\in A_i}u_j\in \left((v'')^G\cup (v'')^{-G}\right)^{16m}$ for $i=1,2$
and thus 
 $$u''\in \left((v'')^G\cup (v'')^{-G}\right)^{32m}.$$
 Since $u''\in (u')^G$, $v''\in (v')^G$ and $u'\in(u^G)_{\eps,d_{\rm HS}}$, $v'\in(v^G)_{\eps,d_{\rm HS}}$, we get
 $$u\in\left((u'')^G\right)_{\eps}\subseteq \left(\left( (v^G)_{\eps,d_{\rm HS}}\cup(v^{-G})_{\eps,d_{\rm HS}}\right)^{32m}\right)_{\eps,d_{\rm HS}}.$$
% Observe that if $u$ and $v$ were diagonal from the beginning, then $u'=u,\ v'=v$ and we are already done. In the general case, we have $u'\in(u^G)_{\eps,d_{\rm HS}}$, $v'\in(v^G)_{\eps,d_{\rm HS}}$ and thus 
% $$u\in\left(\left( (v^G)_{\eps,d_{\rm HS}}\cup(v^{-G})_{\eps,d_{\rm HS}}\right)^{1500km}\right)_{\eps,d_{\rm HS}}.$$
 Thus by \cite[Lemma 2.5]{DT-15} we have $$u\in\left(\left( v^G\cup v^{-G}\right)^{32m}\right)_{(32m+1)\eps,d_{\rm HS}}.$$
 Letting $\eps$ tend to zero, we arrive at the conclusion 
 $$u\in\overline{\left(v^G\cup v^{-G}\right)^{32m}}^{d_{\rm HS}}.$$
In case 
$u' \in S^1 \cdot \cU(\ell^2 \mathbb N)_K$ is diagonalizable we may assume that $1$ is the only cluster point in the spectrum, i.e., $u' \in \cU(\ell^2 \mathbb N)_K$. By a Hilbert-Schmidt perturbation, we may restrict to the case that $1-u'$ is not trace class, i.e., $u'= \diag(e^{\complex\theta_0},e^{\complex\theta_1},\ldots)$ with $\sum_i |\theta_i| = \infty$. We can easily define sequences $(\theta'_n)$ and $(\theta''_n)$ such that $\theta_n = \theta'_n + \theta''_n$, $\max\{ |\theta'_n|, |\theta''_n| \} \leq 2|\theta_n|$ so that the sums of the positive elements and the sums over negative elements in each of the sequences diverge. In this case then, we may find an angle ordered conjugate and argue as before -- changing the constant from $32$ to $128$. This completes the proof.
\end{proof}

\begin{rem}
 Fong and Sourour showed in \cite{FS-85} that every proper normal subgroup of $\cU(\ell^2 \mathbb N)$ is contained in the normal subgroup $\cU(\ell^2 \mathbb N)_{K}$ and that if $v\in\cU(\ell^2 \mathbb N)\setminus \cU(\ell^2 \mathbb N)_{K}$, then each $u \in \cU(\ell^2 \mathbb N)$ is a product of a finite number of conjugates of $v$. So Theorem \ref{thm_main_1} on the one hand can be considered weaker than the result of Fong and Sourour because it involves the Hilbert-Schmidt norm closure, but on the other hand we give quantitative estimates. Our version will allow us to prove property (BNG) with an explicit normal generation function for the connected component of the identity of the projective unitary group of the Calkin algebra.
\end{rem}

Let us now concentrate on the Calkin algebra. We write $\cU(\mathcal{C})$ for its unitary group and $\cP\cU(\mathcal{C})$ for its projective unitary group. Each equivalence class in $\cU(\mathcal{C})$ contains a diagonal element by the Weyl-von Neumann-Berg-Voiculescu Theorem, see \cite[Corollary 2.6 and Theorem 4.2]{V-79}. 
By \cite[Theorem 4.1.6]{Mur} $\mathcal{C}$ is a simple $C^*$-algebra.
%Let us explain why we restrict our attention in Theorem \ref{thm_calkin} to the connected component $G$ of $\cP\cU(\mathcal{C})$ of the identity. 
The (projective) unitary group of the Calkin algebra is not connected (recall that in contrast, the unitary group of a von Neumann algebra is always connected in the uniform topology and hence also in the strong operator topology, see \cite[Exercise 5.7.24(ii)]{KR}). Its connected components are characterized by the Fredholm index. Since we want to use Theorem \ref{thm_main_1} we need to ensure that the elements of consideration in $\cP\cU(\mathcal{C})$ can be lifted to $\cU(\ell^2 \mathbb N)$. This lift exists precisely if the elements have Fredholm index $0$, that is, they are in the connected component of the identity.\\

%Now let $u,v\in\cP\cU(\cC)\setminus\set{1}$ be nontrivial diagonal elements of Fredholm index 0 satisfying 
%$$\inf_{\lambda\in\cZ(\cU(\ell^2 \mathbb N))}\norm{1-\lambda u}_{ess}\leq m\inf_{\lambda\in\cZ(\cU(\ell^2 \mathbb N))}\norm{1-\lambda v}_{ess},$$
%where $\norm{\cdot}_{ess}$ denotes the essential norm on $\cC$. We may write $u=\diag(e^{\complex\theta_0},e^{\complex\theta_1},\ldots)$ and $v=\diag(e^{\complex\gamma_0},e^{\complex\gamma_1},\ldots)$ for some $\theta_j,\gamma_j\in[0,2\pi]$. By assumption, there are at least two different $\gamma_j$, let us say $\gamma_0$ and $\gamma_1$ form such a pair. The corresponding eigenvalues have infinite multiplicity.

%
%\begin{thm}\label{thm_calkin}
% Let $G$ denote connected component of the identity of $\cP\cU(\mathcal{C})$.
% Assume that $u,v\in G\setminus\set{1}$ satisfy $\ell_{\mathrm{ess}}(u)\leq m\ell_{\mathrm{ess}}(v)$. Then, $u\in (v^G\cup v^{-G})^{80m}.$
% In particular, $G$ has property (BNG): if $\ell_{\mathrm{ess}}(v)>0$, then 
% $G=(v^G\cup v^{-G})^{m},$
% for every $m\geq 40/\ell_{\mathrm{ess}}(v)$. Thus, $f(v)=40/\ell_{\rm ess}(v)$ is an optimal normal generating function.
%\end{thm}

\begin{proof}[Proof of Theorem \ref{thm_calkin}]
 Let $H\coloneqq\cP\cU(\ell^2 \mathbb N)$. Since $u$ and $v$ are of Fredholm index $0$, there exists a lift into $H$. We denote the corresponding elements by $u'$ and $v'$. Since the eigenvalues of $u,v$ have infinite multiplicity, we have $\ell(u')\leq m\ell_{\mathrm{ess}}(v')$.  Hence by the proof of Theorem \ref{thm_main_1} we have 
$$u'\in\overline{(v'^H\cup v'^{-H})^{32m}}^{d_{\rm HS}}.$$
We may pass back to $\cP\cU(\cC)$ by using the quotient map, so that we obtain
$$u\in (v^G\cup v^{-G})^{32m},$$
as claimed.
To see the second claim in the statement of Theorem \ref{thm_calkin}, note that $\ell_{\mathrm{ess}}(u)\leq 2$ for any $u\in G$.
\end{proof}

Immediately, we obtain as a corollary that the connected component $G$ of the identity in $\cP\cU(\mathcal{C})$ is algebraically simple, a result that was also found by Fong and Sourour in \cite{FS-85}.

%Fong and Sourour actually showed more, see \cite[Theorem 3]{FS-85}: the normal subgroups of $\cU(\mathcal{C})$ are its center and the groups 
%$$N_n\coloneqq \set{u\in\cU(\mathcal{C})\mid n\mbox{ divides the Fredholm index of }u},\qquad n\in\bbN_0.$$
%
%
%
\section{Bounded normal generation for type III factors}

In this section we show that the projective unitary group $\mathrm{PU}(\cM)$ of a type $\rm III$ factor $\cM$ has property (BNG), see Theorem \ref{thm_main_3}. To accomplish our goal we use some ideas and results from \cite[Section 7]{DT-15}. 

We put 
$$\ell(x)\coloneqq\inf_{\lambda\in S^1}\norm{1-\lambda x}\quad\mbox{ for }x\in\cM.$$
The proof of the following statement borrows ideas from \cite[Proposition 7.2]{DT-15}. It will be useful in the proof of Theorem \ref{thm_main_3} to keep track of the length function $\ell(\cdot)$ under taking appropriate commutators.
\begin{prop}\label{prop_comm}
 Let $\cM$ be a type \rm III factor and denote by $G$ its unitary group. For every $u\in G$ there exists $v\in G$ such that $\ell(u)\leq 4\ell([u,v])$.
\end{prop}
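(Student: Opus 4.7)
The plan is to construct $v$ as a self-adjoint unitary (a symmetry) that exchanges two well-separated spectral blocks of $u$; the resulting commutator then acts as a scalar on each block and has spectrum that directly controls $\ell([u,v])$.

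First I would normalize by replacing $u$ with $\lambda u$ for a suitable $\lambda \in S^1$ so that $\norm{1 - u} = \ell(u) \eqqcolon r > 0$. Writing $\sigma(u) \subseteq S^1$ for the spectrum, the equality $\ell(u) = r$ prevents $\sigma(u)$ from being contained in any spherical cap of $S^1$ of radius smaller than $r$; a short geometric argument (centering the cap at any point of $\sigma(u)$) then yields $\diam(\sigma(u)) \geq r$, so I can pick $\mu_1, \mu_2 \in \sigma(u)$ with $\abs{\mu_1 - \mu_2} \geq r$.

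For a small $\delta > 0$, let $p_i$ be the spectral projection of $u$ for a sufficiently small arc around $\mu_i$, chosen so that $p_1 p_2 = 0$ and $\norm{u p_i - \mu_i p_i} \leq \delta$. Since $\cM$ is a type $\mathrm{III}$ factor, every nonzero projection is properly infinite, so after possibly replacing $p_1,p_2$ by equivalent nonzero subprojections I may assume $p_1 \sim p_2$, fixing a partial isometry $w \in \cM$ with $w^{*}w = p_1$ and $ww^{*} = p_2$. Then $v \coloneqq w + w^{*} + (1 - p_1 - p_2)$ satisfies $v = v^{*}$ and $v^{2} = 1$. A direct block-matrix computation in the decomposition induced by $p_1,\,p_2,\,1-p_1-p_2$, using $u p_i \approx \mu_i p_i$ and $uq = qu$ for $q = 1 - p_1 - p_2$, gives
$$[u,v] = u v u^{-1} v \;=\; \bar\mu_2 \mu_1 \, p_1 \;+\; \bar\mu_1 \mu_2 \, p_2 \;+\; (1 - p_1 - p_2)$$
up to an error of order $\delta$ in operator norm.

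Setting $\zeta \coloneqq \bar\mu_2 \mu_1$, the idealized right-hand side is a unitary with spectrum $\set{1, \zeta, \bar\zeta}$, and $\abs{1 - \zeta} = \abs{\mu_1 - \mu_2} \geq r$. For any $c \in S^1$, the triangle inequality forces $\max(\abs{c - 1}, \abs{c - \zeta}) \geq \abs{1 - \zeta}/2 \geq r/2$, so $\ell$ of that idealized operator is at least $r/2$; since $\ell$ is $1$-Lipschitz in the operator norm, choosing $\delta$ small enough yields $\ell([u,v]) \geq r/4$, as required. The main obstacle in this plan is the projection-matching step --- guaranteeing that one can find a symmetry inside $\cM$ which swaps two nonzero spectral blocks of $u$ --- and this is exactly where the type $\mathrm{III}$ hypothesis enters; the rest of the argument is a routine perturbation calculation, and the factor $4$ in the statement comfortably absorbs the $O(\delta)$ error so that the slightly sharper bound $\ell(u) \leq 2\ell([u,v])$ obtained in the idealized computation is not needed.
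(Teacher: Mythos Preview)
Your approach is essentially the paper's --- approximate $u$ spectrally, pick two far-apart spectral values, build a symmetry $v$ swapping the corresponding projections, and read off $\ell([u,v])$ from the resulting (approximately) diagonal commutator --- but there is a genuine gap in the case $q = 1 - p_1 - p_2 = 0$.

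Concretely, take $u$ to be a nontrivial symmetry, so $\sigma(u) = \{1,-1\}$, $\mu_1 = 1$, $\mu_2 = -1$, and (for any choice of arcs) $p_1 + p_2 = 1$. Your $v$ then swaps the two eigenspaces; a direct check gives $vuv = -u$ and hence $[u,v] = u(vu^{-1}v) = u(-u) = -1$, so $\ell([u,v]) = 0$ while $\ell(u) = \sqrt{2}$. The point is that your lower bound $\max(\abs{c-1},\abs{c-\zeta}) \geq \abs{1-\zeta}/2$ only controls $\ell$ of the idealized commutator if $1$ actually lies in its spectrum, i.e.\ if $q \neq 0$; when $q = 0$ the spectrum is $\{\zeta,\bar\zeta\}$, and for $\zeta$ near $-1$ this set has arbitrarily small projective diameter.

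The fix --- and this is precisely what the paper does --- is to use the type~$\mathrm{III}$ hypothesis once more: split $p_1 = p_1' + p_1''$ into two nonzero subprojections (possible since $p_1$ is properly infinite) and let $v$ swap only $p_1'$ with an equivalent piece of $p_2$, acting trivially elsewhere. Then the commutator equals $1$ on $p_1'' \neq 0$, forcing $1$ into its spectrum, and your triangle-inequality estimate goes through. One extra wrinkle is that $p_1'$ need not be a spectral projection of $u$, so $u$ is only \emph{approximately} block-diagonal in the new decomposition; but since $\norm{up_1 - \mu_1 p_1} \leq \delta$ the off-diagonal terms are still $O(\delta)$ and are absorbed by the same perturbation argument you already sketched. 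So the type~$\mathrm{III}$ hypothesis is doing more work than you credit it with: it is needed not only to match $p_1 \sim p_2$, but also to guarantee a nontrivial fixed block for $v$.
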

\begin{proof}
 If $u\in G$ is central, then the claim is trivial. So assume $u$ to be non-central. Approximate $u$ by $u'$ with finite spectrum in the uniform topology using functional calculus, i.e., find $\varepsilon>0$ such that $9\eps\leq \ell(u')$, $\norm{u-u'}<\eps$ and $u'=\sum_{i=0}^n \lambda_ip_i$ for some $\lambda_i\in S^1$ and projections $p_i\in\cM$. Without loss of generality, the diagonal matrix $\til{u}=\diag(\lambda_0,\lambda_1,\ldots,\lambda_n)$ is such that $\abs{\lambda_0-\lambda_1}$ is maximal among all differences $\abs{\lambda_i-\lambda_j}$. Write $p_0=p_0'+p_0''$ for some nonzero projections (equivalent to $p_0$) and analogously $p_1=p_1'+p_1''$ so that $\til{u}=\diag(\lambda_0',\lambda_0'',\lambda_1',\lambda_1'',\ldots,\lambda_n)$ with $\lambda_i'=\lambda_i''=\lambda_i$ for $i=0,1$. Let $\sigma$ be a permutation acting nontrivially only on the first four entries of $\til{u}$ as follows:
 $$\lambda_{\sigma(0)}=\lambda_1',\quad \lambda_{\sigma(1)}=\lambda_0',\quad \lambda_{\sigma(2)}=\lambda_0'',\quad \lambda_{\sigma(3)}=\lambda_1''.$$
 Let $v\in\rm U(\cM)$ such that $vp_iv^{-1}=p_{\sigma^{-1}(i)}$. Then we have
 $$[u',v]=\sum_{i=0,\ldots,n-1}\lambda_ip_i\sum_{j=0,\ldots,n-1}\overline{\lambda}_jp_{\sigma^{-1}(j)}=\sum_{i=0,\ldots,n-1}\lambda_i\overline{\lambda}_{\sigma(i)}p_i.$$
 Observe that $\ell([u,v])\geq \ell([u',v])-2\eps$. Now the estimates   
 $$4\ell([u,v])\geq 4\ell([u',v])-8\eps\geq 2\abs{\lambda_0-\lambda_1}-8\eps\geq 2\ell(u')-8\eps\geq \ell(u') +\varepsilon \geq \ell(u)$$
 imply the claim.
\end{proof}

We are now ready to prove Theorem \ref{thm_main_3}, following a strategy that was already used in our previous work, see \cite[Theorem 7.5]{DT-15}.

\begin{proof}[Proof of Theorem \ref{thm_main_3}]
 Suppose first that $\cM$ is separable.
 We will generate a nontrivial symmetry with conjugates of $v$ and $v^{-1}$ which can be used to generate the whole group $G\coloneqq\rm PU(\cM)$ by Fillmore's corollary in \cite{Fill-66}. \\ 
 Using functional calculus we can approximate $v$ by some $v'=\sum_{i=1}^n\lambda_ip_i$, $\lambda_i\in S^1,\ p_i\in\rm Proj(\cM)$ spectral projections of $v$, up to arbitrarily small $\eps>0$ in the operator norm. In particular, we can approximate such that $\ell(v)\leq 2\ell(v')$. Write $p_i=p_i'+p_i''$ for nontrivial projections $p_i'$ and $p_i''$, i.e. $v'=\sum_{i=1}^n\lambda_ip_i'+\sum_{i=1}^n\lambda_ip_i''$ and $v=v_0+v_1=vp'+vp''$, where $p'=\sum_{i=1}^n p_i'$ and $p''=\sum_{i=1}^n p_i''$. By Proposition \ref{prop_comm} we can find $w=w_0+p''\in G$, such that $[v,w]=[v_0,w_0]+p''\cong \left(\begin{smallmatrix} [v_0,w_0] & 0 \\ 0 & 1 \end{smallmatrix} \right)$ and $\ell(v)\leq 2\ell(v')=2\ell(v'p')\leq 8\ell([v_0,w_0])$. 
 
 Let $$v''\coloneqq \left( \begin{smallmatrix} [v_0,w_0] & 0 \\ 0 & [v_0,w_0]^{-1} \end{smallmatrix} \right)\in (v^G\cup v^{-G})^4.$$ 
 We pass to $(M_{2\times 2}(L^{\infty}(\sigma(v''),\mu)),\norm{\cdot})$ and observe that there exists a subset $X\subseteq \sigma(v'')$ with $\mu(X)>0$ where $\ell(p_X v'')\geq \ell(v'')/2\geq \ell(v)/16$, $p_X$ being the spectral projection corresponding to $X$.  
 Using \cite[Lemma 7.4]{DT-15} (and that $p_X$ is nonzero and thus equivalent to any nonzero projection in $\cM$) we generate a nontrivial symmetry $s$ with $8/\ell(p_X v'')\leq 128/\ell(v)$ conjugates of $v''$ (here an additional factor $4$ comes from the relation between $\ell(\cdot)$ and angles as given in \cite[Corollary 5.6]{DT-15}). Thus $s\in (v''^G\cup v''^{-G})^{\lceil 128/\ell(v)\rceil}\subseteq (v^G\cup v^{-G})^{\lceil 512/\ell(v)\rceil}.$
 Since all nontrivial symmetries in $G$ are conjugate, using the corollary in \cite{Fill-66} we conclude that every unitary can be written as the product of at most four conjugates of $s$, that is
 $$G=(v^{G}\cup v^{-G})^{\lceil 2048/\ell(v)\rceil}.$$
 If $\cM$ is not separable, then the result follows from the fact that any two unitaries in $\cM$ lie in a common separable type III factor.
\end{proof}

\begin{cor}
 The projective unitary group of a type III factor is simple.
\end{cor}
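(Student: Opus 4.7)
The plan is to derive simplicity immediately from property (BNG), exactly as the authors observe right after the definition of (BNG) in the introduction. I would take any nontrivial normal subgroup $N \trianglelefteq G := \mathrm{PU}(\cM)$ and pick some nontrivial element $v \in N$. Since $N$ is closed under conjugation, $v^G \cup v^{-G} \subseteq N$, and therefore $(v^G \cup v^{-G})^k \subseteq N$ for every $k \in \mathbb{N}$.

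Next I would apply Theorem \ref{thm_main_3} with $k := \lceil 2048/\ell(v)\rceil$, which yields $(v^G \cup v^{-G})^k = G$. Combined with the previous inclusion, this forces $N = G$, so $G$ admits no nontrivial proper normal subgroup.

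The only point requiring verification is that $k$ is finite, i.e., that $\ell(v) > 0$ for every nontrivial $v \in \mathrm{PU}(\cM)$. This is automatic from the construction $\mathrm{PU}(\cM) = \cU(\cM)/(S^1 \cdot 1)$: a lift $\tilde v \in \cU(\cM)$ of $v$ satisfies $\ell(\tilde v) = 0$ exactly when $\tilde v \in S^1 \cdot 1$, which happens precisely when $v$ represents the identity in the quotient. Consequently there is no real obstacle to overcome; the corollary is a direct logical consequence of Theorem \ref{thm_main_3}, and the proof amounts to unpacking the definition of (BNG).
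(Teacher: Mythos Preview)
Your proof is correct and follows exactly the approach implicit in the paper: the corollary is stated without proof because, as the authors remark in the introduction, any group with property (BNG) is obviously simple, and Theorem \ref{thm_main_3} establishes (BNG) for $\mathrm{PU}(\cM)$. Your unpacking of this implication, including the check that $\ell(v)>0$ for nontrivial $v$, is precisely what is needed.
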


%
%\begin{rem}
% Note that a homomorphism from the projective unitary group of a type $\rm III$ factor $\cM$ to any separable SIN group is trivial. 
% It follows that the projective unitary group of a type III factor does not admit a Polish SIN group topology. 
%\end{rem}

\section*{Acknowledgments}

The research leading to these results has received funding from the European Research Council under the European Union's Seventh Framework Programme ERC StG 277728 and  ERC CoG 614195. P.A.D.\ wants to thank Universit\"at Leipzig, the IMPRS Leipzig and the MPI-MIS Leipzig for support and an stimulating research environment. The material on the Calkin algebra appeared as part of the PhD-thesis of the first author.

\bibliographystyle{alpha}

\end{document}